\title[trees and rational generating functions]{Equivalence classes of nodes in trees\\and rational generating functions}
\author{Amritanshu Prasad}
\address{The Institute of Mathematical Sciences, Chennai.}
\email{amri@imsc.res.in}
\date{\today}
\subjclass[2010]{05A15}
\keywords{Trees, generating functions, Bell numbers, Stirling numbers, Gaussian binomial coefficients, simultaneous conjugacy classes, commuting tuples, modules for polynomial algebras, finite fields}
\newcommand{\B}{\mathbf B}
\newcommand{\Fq}{\mathbf F_q}
\newcommand{\one}{\mathbf 1'}
\newcommand{\n}{\mathbf n}
\newcommand{\m}{\mathbf m}
\newcommand{\X}{\mathbf X}
\newtheorem*{definition*}{Definition}
\newtheorem*{conjecture*}{Conjecture}
\newtheorem{theorem}{Theorem}
\theoremstyle{remark}
\newtheorem{example}{Example}[theorem]
\begin{document}
\maketitle

\begin{abstract}
  Let $c_n$ denote the number of nodes at a distance $n$ from the root of a rooted tree.
  A criterion for proving the rationality and computing the rational generating function of the sequence $\{c_n\}$ is described.
  This criterion is applied to counting the number of conjugacy classes of commuting tuples in finite groups and the number of isomorphism classes of representations of polynomial algebras over finite fields.
  The method for computing the rational generating functions, when applied to the study of point configurations in finite sets, gives rise to some classical combinatorial results on Bell numbers and Stirling numbers of the second kind.
  When applied to the study of vector configurations in a finite vector space, it reveals a connection between counting such configurations and Gaussian binomial coefficients.
\end{abstract}

\section{Introduction}
\label{sec:intro}
This paper begins by describing a technique for proving the rationality of, and often explicitly computing, ordinary generating functions of certain combinatorial sequences (Theorem~\ref{theorem:lineal}).
It applies to sequences whose $n$th term can be expressed as a the number of nodes in a rooted tree at a distance $n$ from the root.
The rationality rests upon the finiteness of the number of what are called lineal isomorphism classes of nodes.

The counting of simultaneous conjugacy classes of commuting $n$-tuples in a finite group $G$ is, in general, a difficult combinatorial problem.
However, the rationality of the generating function associated to this count turns out to be an easy consequence of Theorem~\ref{theorem:lineal} (see Theorem~\ref{theorem:group-tuples}).
These generating functions are computed explicitly for the first five symmetric groups (Table~\ref{tab:hSn}).

A slight variant of this result also shows that if $c_{q,m}(n)$ denotes the number of isomorphism classes of $m$-dimensional representations of the polynomial algebra $\Fq[x_1,\dotsc,x_n]$ (here $\Fq$ is a finite field of order $q$), then $c_{q,m}(n)$ (as a sequence in $n$) has a rational generating function (Theorem~\ref{theorem:polyalg}).

The method from Theorem~\ref{theorem:lineal} for computing generating functions can sometimes be applied advantageously even to situations where rationality is easy to see by other methods.
For instance, when applied to counting point configurations in finite sets, it leads to beautiful classical results concerning Bell numbers and Stirling numbers of the second kind.
When applied to counting vector configurations in finite vector spaces, it leads to the discovery of a new interpretation of Gaussian binomial coefficients.

\section{Lineal equivalence and rational generating functions}
\label{sec:line-equiv-rati}
Let $T$ denote the vertex set of a rooted tree with root vertex $x_0$.
Let $T_n$ denote the set of vertices of $T$ which are a distance $n$ from $x_0$.
Then $T$ is a disjoint union:
\begin{equation*}
  T = \coprod_{n=0}^\infty T_n.
\end{equation*}
We will give a sufficient condition for the formal generating function
\begin{equation}
  \label{eq:gen-fun}
  f_T(t) = \sum_{n=0}^\infty |T_n|t^n
\end{equation}
to be a rational function in $t$ and a technique for its computation.

If $X\in T_n$ and $Y\in T_{n+1}$ are connected by an edge, then we say that $Y$ is a \emph{child} of $X$, and write $X\to Y$.
More generally, if $X\in T_n$ and $Y\in T_{n+k}$ for some $k\geq 0$ are such that there exists a sequence
\begin{equation*}
  X=X_0\to X_1\to \dotsb \to X_k = Y,
\end{equation*}
then we say that $Y$ is a \emph{descendant} of $X$ (under our definition $X$ is a descendant of $X$).

For each $X\in T$, let $T(X)$ denote the full subtree consisting of the descendants of $X$.
This is again a rooted tree, with root $X$.
\begin{definition*}
  [Lineal Isomorphism]
  Two vertices $X$ and $Y$ of $T$ are said to be lineally isomorphic if the rooted trees $T(X)$ and $T(Y)$ are isomorphic (in other words, there is a graph isomorphism $T(X)\to T(Y)$ taking $X$ to $Y$).
\end{definition*}
Clearly, lineal isomorphism is an equivalence relation on $T$.
The equivalence classes of this relation are called \emph{lineal isomorphism classes}.
\begin{theorem}
  If $X$ and $Y$ are lineally isomorphic nodes in a rooted tree $T$, then for any lineal isomorphism class $C$, the number of children of $X$ in $C$ is equal to the number of children of $Y$ in $C$.
\end{theorem}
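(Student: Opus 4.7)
The plan is to extract the desired bijection directly from the definition of lineal isomorphism. Since $X$ and $Y$ are lineally isomorphic, there is by definition a rooted-tree isomorphism $\varphi\colon T(X)\to T(Y)$ with $\varphi(X)=Y$. Being a graph isomorphism that preserves the root, $\varphi$ preserves the distance from the root, and in particular it sends edges incident to $X$ to edges incident to $Y$. Hence the restriction of $\varphi$ to the neighbours of $X$ in $T(X)$ gives a bijection between the set of children of $X$ and the set of children of $Y$.

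The next step is to check that this bijection respects lineal isomorphism classes. For each child $X'$ of $X$, let $Y'=\varphi(X')$. The subtree $T(X')$ sits inside $T(X)$ as the set of descendants of $X'$, and similarly $T(Y')$ sits inside $T(Y)$; since $\varphi$ preserves the descendant relation, it restricts to a rooted-tree isomorphism $T(X')\to T(Y')$ sending $X'$ to $Y'$. Thus $X'$ and $Y'=\varphi(X')$ lie in the same lineal isomorphism class.

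Now for any lineal isomorphism class $C$, the bijection $\varphi$ restricts to a bijection between the children of $X$ that lie in $C$ and the children of $Y$ that lie in $C$ (with inverse given by $\varphi^{-1}$, which is also a rooted-tree isomorphism and enjoys the symmetric property). Therefore both sets have the same cardinality, which is the claim.

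I do not anticipate a genuine obstacle: the statement is essentially an unpacking of definitions. The only subtlety worth a line in the write-up is the observation that a root-preserving graph isomorphism automatically preserves distance from the root, and therefore carries the children of the root bijectively to the children of the root. Everything else is inherited from the fact that restrictions of isomorphisms to full subtrees on corresponding descendants are again isomorphisms.
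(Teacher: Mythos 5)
Your proof is correct and follows the same route as the paper: the paper's own argument simply takes the rooted-tree isomorphism $T(X)\to T(Y)$ and observes that it induces a bijection from the children of $X$ in $C$ to the children of $Y$ in $C$. Your write-up merely spells out the details (preservation of children and of descendant subtrees) that the paper leaves implicit.
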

\begin{proof}
  Since $X$ and $Y$ are lineally isomorphic, there exists an isomorphism $T(X)\to T(Y)$ of rooted trees.
  This isomorphism defines a bijection from the children of $X$ in $C$ to the children of $Y$ in $C$.
\end{proof}
\begin{theorem}
  \label{theorem:lineal}
  Let $T$ be a rooted tree with finitely many lineal isomorphism classes $C_1,\dotsc,C_N$, the root of of $T$ lying in $C_1$.
  Let $\B = (b_{ij})$ be the $N\times N$ matrix where $b_{ij}$ is the number of children that a node in the class $C_j$ has in the class $C_i$.
  Then, for each $i\in \{1,\dotsc,N\}$,
  \begin{equation}
    \label{eq:ogfi}
    \sum_{n=0}^\infty |T_n\cap C_i|t^n = e_i'(I-\B t)^{-1} e_1.
  \end{equation}
  Here, for each $i\in \{1,\dotsc,N\}$, $e_i$ denotes the $i$th coordinate vector, viewed as an $N\times 1$ matrix, and $e_i'$ its transpose.
  In particular, the sequence $\{|T_n\cap C_i|\}_{n=0}^\infty$ has a rational generating function for each $i$.
  Consequently,
  \begin{equation}
    \label{eq:ogf}
    \sum_{n=0}^\infty |T_n|t^n = \one (I - \B t)^{-1}e_1.
  \end{equation}
  Here $\one$ is the $1\times N$ all-ones row vector.
\end{theorem}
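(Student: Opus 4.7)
The plan is to translate the structural observation of the preceding theorem into a linear recursion satisfied by the sequences $a_i(n) := |T_n\cap C_i|$, and then solve that recursion using the formal geometric series $(I-\B t)^{-1} = \sum_{n\geq 0}\B^n t^n$.

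First I would note that every child of a node in $T_n$ lies in $T_{n+1}$, so partitioning $T_n$ by lineal isomorphism class and applying the preceding theorem gives, for each $i$,
\begin{equation*}
  a_i(n+1) \;=\; \sum_{j=1}^N b_{ij}\, a_j(n),
\end{equation*}
since a node in $T_n\cap C_j$ contributes exactly $b_{ij}$ children to $T_{n+1}\cap C_i$ and, by the preceding theorem, this count depends only on $C_j$, not on the choice of representative. Writing $a(n)$ for the column vector with entries $a_i(n)$, this recursion reads $a(n+1) = \B\, a(n)$. The initial condition is $a(0) = e_1$, because $T_0=\{x_0\}$ and by hypothesis $x_0 \in C_1$.

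Iterating gives $a(n) = \B^n e_1$, and hence $a_i(n) = e_i' \B^n e_1$. Multiplying by $t^n$ and summing,
\begin{equation*}
  \sum_{n=0}^\infty a_i(n)\, t^n \;=\; e_i' \Bigl(\sum_{n=0}^\infty (\B t)^n\Bigr) e_1 \;=\; e_i'(I-\B t)^{-1} e_1,
\end{equation*}
where the inversion is legitimate in the ring of formal power series with $N\times N$ matrix coefficients because $I-\B t$ has constant term $I$. This establishes \eqref{eq:ogfi}, and in particular each coordinate generating function is rational in $t$ since the entries of $(I-\B t)^{-1}$ are rational functions. Summing \eqref{eq:ogfi} over $i$ and using $|T_n| = \sum_i |T_n\cap C_i|$ together with $\one = \sum_i e_i'$ yields \eqref{eq:ogf}.

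There is no substantial obstacle here once the preceding theorem is in place: it is precisely that theorem which guarantees that the matrix $\B$ is well defined, making the recursion $a(n+1)=\B a(n)$ meaningful. Everything after that is a standard transfer-matrix manipulation, and I would only need to be careful to justify the matrix geometric series as an identity of formal power series rather than one requiring analytic convergence.
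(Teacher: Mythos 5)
Your proposal is correct and follows essentially the same route as the paper's own proof: setting up the vector recurrence $v^{(n)}=\B v^{(n-1)}$ with $v^{(0)}=e_1$, iterating to get $\B^n e_1$, and summing the matrix geometric series to obtain $(I-\B t)^{-1}e_1$. Your added remark justifying the inversion as an identity of formal power series is a small point of extra care beyond what the paper states explicitly, but the argument is the same.
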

\begin{proof}
  Let $v_j^{(n)} = |C_j\cap T_n|$ for $n\geq 0$ and $1\leq j\leq N$.
  Let $v^{(n)}$ denote the column vector with coordinates $(v^{(n)}_1,\dotsc, v^{(n)}_N)$.
  The hypothesis that the root of $T$ lies in $C_1$ implies that $v^{(0)} = e_1$.

  Since each node in $C_j\cap T_{n-1}$ contributes $b_{ij}$ elements to $C_i\cap T_n$,
  \begin{equation}
    \label{eq:recurrence}
    v_i^{(n)} = |C_i\cap T_n| = \sum_{j=1}^N |C_j\cap T_{n-1}|b_{ij} = \sum_{j=1}^N b_{ij}v_j^{(n-1)}.
  \end{equation}
  The recurrence relation (\ref{eq:recurrence}) can be written in matrix form as:
  \begin{equation*}
    v^{(n)} = \B v^{(n-1)},
  \end{equation*}
  upon iterating which (and using the fact that $v^{(0)} = e_1$), we get
  \begin{equation*}
    v^{(n)} = \B^ne_1.
  \end{equation*}
  Therefore,
  \begin{align*}
    \sum_{n=0}^\infty v^{(n)}t^n & = \sum_{n = 0}^\infty \B^nt^ne_1\\
    & = (I-\B t)^{-1}e_1.
  \end{align*}
  Comparing the entries of the column vectors on the two sides of the above identity gives the identities \eqref{eq:ogfi}.
  The identity \eqref{eq:ogf} is their sum.
\end{proof}
\begin{example}
  Figure~\ref{fig:two} shows the vertices within distance $3$ from the root of a tree with two lineal equivalence classes, labelled `$1$' and `$2$'.
  The vertices  of type $1$ have three children, one of type $1$ and two of type $2$, while the vertices of type $2$ have no children of type $1$ and two children of type $2$.
  \begin{figure}
    \begin{center}
      \begin{tikzpicture}[scale = 0.6]
        \path (10,0) node [draw, shape = circle, fill = lightgray] (P1) {$1$};
        \path (5,-2) node [draw, shape = circle, fill = lightgray] (P21) {$1$};
        \path (10,-2) node [draw, shape = circle] (P22) {$2$};
        \path (15,-2) node [draw, shape = circle] (P23) {$2$};
        \path (2,-4) node [draw, shape = circle, fill = lightgray] (P31) {$1$};
        \path (4.5,-4) node [draw, shape = circle] (P32) {$2$};
        \path (7,-4) node [draw, shape = circle] (P33) {$2$};
        \path (9.5,-4) node [draw, shape = circle] (P34) {$2$};
        \path (12,-4) node [draw, shape = circle] (P35) {$2$};
        \path (14.5,-4) node [draw, shape = circle] (P36) {$2$};
        \path (17,-4) node [draw, shape = circle] (P37) {$2$};
        \path (0,-6) node [draw, shape = circle, fill = lightgray] (P41) {$1$};
        \path (1.3,-6) node [draw, shape = circle] (P42) {$2$};
        \path (2.6,-6) node [draw, shape = circle] (P43) {$2$};        
        \path (3.9,-6) node [draw, shape = circle] (P44) {$2$};        
        \path (5.2,-6) node [draw, shape = circle] (P45) {$2$};        
        \path (6.5,-6) node [draw, shape = circle] (P46) {$2$};        
        \path (7.8,-6) node [draw, shape = circle] (P47) {$2$};        
        \path (9.1,-6) node [draw, shape = circle] (P48) {$2$};        
        \path (10.4,-6) node [draw, shape = circle] (P49) {$2$};        
        \path (11.7,-6) node [draw, shape = circle] (P4A) {$2$};        
        \path (13,-6) node [draw, shape = circle] (P4B) {$2$};        
        \path (14.3,-6) node [draw, shape = circle] (P4C) {$2$};        
        \path (15.6,-6) node [draw, shape = circle] (P4D) {$2$};        
        \path (16.9,-6) node [draw, shape = circle] (P4E) {$2$};        
        \path (18.2,-6) node [draw, shape = circle] (P4F) {$2$};        
        \draw[->, > = latex]
        (P1) edge (P21)
        (P1) edge (P22)
        (P1) edge (P23)
        (P21) edge (P31)
        (P21) edge (P32)
        (P21) edge (P33)
        (P22) edge (P34)
        (P22) edge (P35)
        (P23) edge (P36)
        (P23) edge (P37)
        (P31) edge (P41)
        (P31) edge (P42)
        (P31) edge (P43)
        (P32) edge (P44)
        (P32) edge (P45)
        (P33) edge (P46)
        (P33) edge (P47)
        (P34) edge (P48)
        (P34) edge (P49)
        (P35) edge (P4A)
        (P35) edge (P4B)
        (P36) edge (P4C)
        (P36) edge (P4D)
        (P37) edge (P4E)
        (P37) edge (P4F)
        ;
      \end{tikzpicture}
    \end{center}
    \caption{Part of a tree with two lineal isomorphism classes}
    \label{fig:two}
  \end{figure}
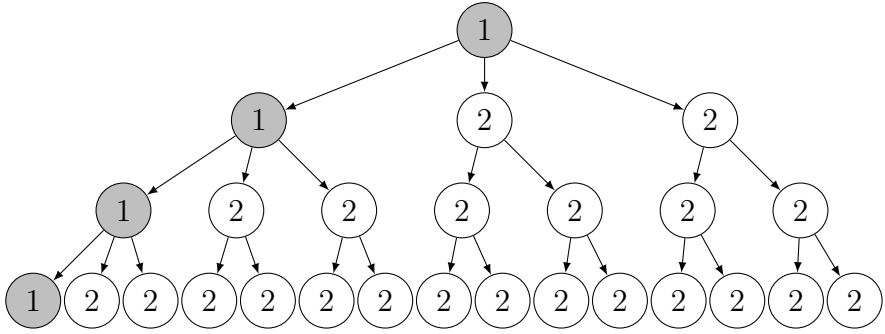
  The ``branching matrix'' is
  \begin{equation*}
    \B =
    \begin{pmatrix}
      1 & 0 \\
      2 & 2
    \end{pmatrix}.
  \end{equation*}
  Therefore, if $v^{(n)}_1$ and $v^{(n)}_2$ are the numbers of nodes of type $1$ and $2$ respectively which lie at distance $n$ from the root, then
  \begin{equation*}
    v^{(n)} :=
    \begin{pmatrix}
      v^{(n)}_1\\
      v^{(n)}_2
    \end{pmatrix}
    =
    \B^n
    \begin{pmatrix}
      1\\
      0
    \end{pmatrix}.
  \end{equation*}
  We have
  \begin{align*}
    \sum_{n=0}^\infty v^{(n)}t^n & = (I-\B t)^{-1}e_1\\
    & =
    \begin{pmatrix}
      \frac 1{1-t} & 0\\
      \frac{2t}{(1-t)(1-2t)} & \frac 1{1-2t}
    \end{pmatrix}
    \begin{pmatrix}
      1\\
      0
    \end{pmatrix}\\
    & =
    \begin{pmatrix}
      \frac 1{1-t}\\
      \frac{2t}{(1-t)(1-2t)}
    \end{pmatrix}.
  \end{align*}
  If $T_n$ is the set of nodes at a distance $n$ from the root, we have
  \begin{equation*}
    \sum_{n=0}^\infty |T_n|t^n = \frac 1{(1-t)(1-2t)}
  \end{equation*}
\end{example}

\section{Conjugacy classes of commuting tuples in groups}
\label{sec:conjugacy-classes}
Let $G$ be a finite group.
Then $G$ acts on $G^n$ for each non-negative integer $n$ by simultaneous conjugacy:
\begin{equation}
  \label{eq:simul-conj}
  g\cdot(x_1,\dotsc,x_n) = (gx_1g^{-1},\dotsc,gx_ng^{-1}).
\end{equation}
If $a_n$ is the number of orbits for the action of $G$ in $G^n$, it is not difficult to see that $f_G(t) = \sum_{n=0}^\infty a_nt^n$ is a rational function in $t$.
Indeed, by Burnside's lemma
\begin{equation*}
  a_n = \frac 1{|G|}\sum_{g\in G} |Z_G(g)|^n,
\end{equation*}
where $Z_G(g)$ denotes the centralizer of $g$ in $G$.
Therefore,
\begin{align*}
  \sum_{n=0}^\infty a_n t^n & = \sum_{n=0}^\infty \frac 1{|G|}\sum_{g\in G}|Z_G(g)|^nt^n\\
  & = \frac 1{|G|} \sum_{g\in G} \frac 1{1-|Z_G(g)|t},
\end{align*}
which is a rational function in $t$.

\begin{example}
  \label{example:fSn}
  Taking $G = S_m$ (the symmetric group on $m$ symbols),
  \begin{align*}
    f_{S_m}(t) & = \frac 1{m!} \sum_{w\in S_m} \frac 1{1-Z_{S_m}(w)t}\\
    & = \frac 1{m!}\sum_{\lambda\vdash m} \frac{m!}{z_\lambda} \frac 1{1-z_\lambda t}\\
    & = \sum_{\lambda\vdash m} \frac 1{z_\lambda(1-z_\lambda t)}.
  \end{align*}
  Here $\lambda\vdash m$ signifies that $\lambda$ is a partition of $m$, and for each such partition, $z_\lambda$ denotes the cardinality of the centralizer in $S_m$ of a permutation with cycle type $\lambda$.
  If, for each positive integer $i$,  $m_i$ is the number of occurrences of $i$ in $\lambda$, then
  \begin{equation*}
    z_\lambda = \prod_{i=1}^\infty m_i!i^{m_i}.
  \end{equation*}
  The values of $f_{S_m}$ for small values of $m$ are given in Table~\ref{table:fSn}.
  \begin{table}[h]
    \begin{equation*}
      \def\arraystretch{2.2}
      \begin{array}{cc}
        \hline
        m & f_{S_m}(t)\\
        \hline
        1 & \dfrac 1{1-t}\\
        2 & \dfrac 1{1-2t}\\
        3 & \dfrac{1 - 8t + 14t^2}{(1-2t)(1-3t)(1-6t)}\\
        4 & \dfrac{1 - 34t + 276 t^2 -584 t^3}{(1-3t)(1-4t)(1-8t)(1-24t)}\\
        5 & \dfrac{1 - 148t + 3746 t^2 - 36984 t^3 + 159200t^4 - 249792 t^5}{(1-4t)(1-5t)(1-6t)(1-8t)(1-12t)(1-120t)}\\
        \hline
      \end{array}
    \end{equation*}
    \caption{Generating functions for simultaneous conjugacy classes in $S_n$.}
    \label{table:fSn}
  \end{table}
\end{example}
A more subtle problem is that of counting simultaneous conjugacy classes of $n$-tuples of commuting elements in $G$.
For each $n\geq 0$, let
\begin{equation*}
  G^{(n)} = \{(g_1,\dotsc,g_n)\in G^n\mid g_ig_j = g_jg_i \text{ for all }1\leq i,j\leq n\}.
\end{equation*}
In particular, $G^{(0)}$ is the trivial group.

Let $c_n$ denote the number of orbits for the action of $G$ on $G^{(n)}$ by simultaneous conjugation, as given in (\ref{eq:simul-conj}).
Consider the generating function
\begin{equation*}
  h_G(t) = \sum_{n=0}^\infty c_n t^n.
\end{equation*}
Because the elements $g_1,g_2,\dotsc,g_n$ are no longer independent, Burnside's lemma can no longer be used to prove the rationality of $h_G(t)$.
However, Theorem~\ref{theorem:lineal} allows us to show that $h_G(t)$ is rational in $t$, and gives an algorithm to compute it for any finite group.
\begin{theorem}
  \label{theorem:group-tuples}
  For every finite group $G$, the formal power series $h_G(t)$ defined above is a rational function of $t$.
\end{theorem}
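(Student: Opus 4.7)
\smallskip

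\noindent\textbf{Proof plan.} The natural approach is to apply Theorem~\ref{theorem:lineal} to a tree $T = T_G$ whose level sets enumerate simultaneous conjugacy classes of commuting tuples. I would take the vertices at distance $n$ from the root to be the $G$-orbits on $G^{(n)}$, so that $|T_n| = c_n$; the root is the unique orbit on $G^{(0)}$. An edge is drawn from $[(g_1,\dotsc,g_n)]$ to $[(g_1,\dotsc,g_n,g_{n+1})]$ whenever $g_{n+1}$ commutes with each $g_i$ (the choice of representative does not matter, as conjugating the parent tuple simultaneously conjugates the extension).

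The first step is to identify the children of a node $X = [(g_1,\dotsc,g_n)]$: after fixing a representative, any extension $g_{n+1}$ lies in the common centralizer $Z := Z_G(g_1,\dotsc,g_n) = Z_G(g_1) \cap \dotsb \cap Z_G(g_n)$, and two extensions give the same $G$-orbit with the first $n$ coordinates fixed iff they are $Z$-conjugate. Thus the children of $X$ are in bijection with the conjugacy classes of $Z$. Moreover, for the child corresponding to the class of $g_{n+1}\in Z$, the centralizer of the extended tuple in $G$ is exactly $Z \cap Z_G(g_{n+1}) = Z_Z(g_{n+1})$. Iterating this observation gives, by induction on depth, a canonical isomorphism of rooted trees $T(X) \cong T_Z$ between the subtree based at $X$ and the tree constructed from the group $Z$ by the same recipe.

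Since $T_Z$ depends only on the isomorphism type of $Z$ as an abstract group, and $Z$ ranges over the finitely many subgroups of $G$ (each centralizer of a subset is in particular a subgroup), only finitely many isomorphism types of subtrees $T(X)$ occur. Hence $T_G$ has finitely many lineal isomorphism classes $C_1,\dotsc,C_N$, and Theorem~\ref{theorem:lineal} applies, presenting $h_G(t) = \one (I - \B t)^{-1} e_1$ where $\B = (b_{ij})$ records, for a representative node of centralizer-type $C_j$, the number of its conjugacy classes whose internal centralizer has type $C_i$.

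The main thing to nail down carefully is the inductive identification $T(X) \cong T_Z$: one must check that the bijection between children of $X$ and conjugacy classes of $Z$ is compatible with further branching, which reduces to the relation $Z_G(g_1,\dotsc,g_n,g_{n+1}) = Z_Z(g_{n+1})$ together with a coherent choice of representatives at each level. Once this compatibility is in place, finiteness of the lineal classes is immediate from finiteness of the subgroup lattice of $G$, and rationality of $h_G(t)$ follows at once.
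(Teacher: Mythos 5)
Your proposal is correct and follows essentially the same route as the paper: you build the same tree of orbits, establish that the subtree rooted at the orbit of $(g_1,\dotsc,g_n)$ is isomorphic to the tree built from the centralizer $Z_G(g_1,\dotsc,g_n)$ by the same construction (the paper's Theorem~\ref{theorem:cent}, proved there via the appending map $S^{(l)}\to G^{(n+l)}$), deduce finiteness of the lineal isomorphism classes from the finiteness of subgroups of $G$ up to isomorphism, and invoke Theorem~\ref{theorem:lineal}. Your identification of the children of a node with the conjugacy classes of its centralizer, and of the new centralizer as $Z_Z(g_{n+1})$, is exactly the content packaged in the paper's tree isomorphism.
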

\begin{proof}
  Let $T_n^G$ denote the set of $G$-orbits in $G^{(n)}$.
  Say that $Y\in T_{n+1}^G$ is connected to $X\in T_n$ by an edge if there exists $(g_1,\dotsc,g_{n+1})\in Y$ such that $(g_1,\dotsc, g_n)\in X$.
  This gives $T^G = \coprod_{n=0}^\infty T^G_n$ the structure of a rooted tree with root $X_0$ being the unique element of $T_0^G$.
  For $(g_1,\dotsc,g_n)\in G^{(n)}$, let
  \begin{equation*}
    Z_G(g_1,\dotsc,g_n) = Z_G(g_1)\cap Z_G(g_2) \cap \dotsb \cap Z_G(g_n).
  \end{equation*}

  We will see in Theorem~\ref{theorem:cent} below that the $G$-orbit of $(g_1,\dotsc,g_n)$ is lineally isomorphic to the $G$-orbit of $(s_1,\dotsc, s_l)$ in $T^G$ if the group $Z_G(g_1,\dotsc,g_n)$ is isomorphic to the group $Z_G(s_1,\dotsc,s_l)$.
  Since each of these centralizers is a subgroup of the finite group $G$, there are only finitely many possible isomorphism classes for them, and so only finitely many lineal isomorphism classes in $T^G$.
  Thus Theorem~\ref{theorem:lineal} applies, and $h_G(t)$ is a rational function of $t$.
\end{proof}
We now come to Theorem~\ref{theorem:cent} and its proof (which will complete the proof of Theorem~\ref{theorem:group-tuples}).
\begin{theorem}
  \label{theorem:cent}
  Suppose that $(g_1,\dotsc,g_n)\in G^{(n)}$ lies in the $G$-orbit $X\in T_n^G$, then the full subtree of $T^G(X)$ (rooted at $X$) consisting of descendants of $X$ in $T^G$ is isomorphic to the rooted tree $T^{Z_G(g_1,\dotsc,g_n)}$ associated to the group $Z_G(g_1,\dotsc,g_n)$.
\end{theorem}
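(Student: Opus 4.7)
Plan. Set $H = Z_G(g_1,\dotsc,g_n)$. The approach is to construct an explicit isomorphism of rooted trees $\phi : T^H \to T^G(X)$ by sending the $H$-orbit of a commuting tuple $(h_1,\dotsc,h_k) \in H^{(k)}$ to the $G$-orbit of the concatenated tuple $(g_1,\dotsc,g_n,h_1,\dotsc,h_k)$. This concatenation actually lies in $G^{(n+k)}$, because each $h_i \in H = Z_G(g_1,\dotsc,g_n)$ commutes with every $g_j$ and the $h_i$ commute among themselves; and at $k=0$ the $H$-orbit of the empty tuple is sent to the orbit of $(g_1,\dotsc,g_n)$, which is $X$, so roots match.

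First I would verify well-definedness and injectivity, which are essentially bookkeeping. For well-definedness, an element $h \in H$ that conjugates $(h_1,\dotsc,h_k)$ to $(h'_1,\dotsc,h'_k)$ fixes each $g_j$ under conjugation, hence also intertwines the two concatenated tuples in $G$. For injectivity, if some $g \in G$ conjugates one concatenation to the other, then $g$ must centralize each $g_j$, so $g \in H$, and restriction to the tail gives the $H$-equivalence of the two $h$-tuples.

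Next comes the main step, surjectivity, together with the claim that $\phi$ preserves the edge relation. Both rest on the following lemma, which I would prove by induction on the depth: every descendant $Y \in T^G_{n+k}$ of $X$ along a path $X = X_0 \to X_1 \to \dotsb \to X_k = Y$ admits a representative whose first $n$ coordinates are literally $(g_1,\dotsc,g_n)$. Assuming such a representative $(g_1,\dotsc,g_n,h_1,\dotsc,h_{j-1})$ of $X_{j-1}$, I would invoke the edge $X_{j-1} \to X_j$ to obtain \emph{some} tuple in $X_j$ whose prefix lies in $X_{j-1}$, use a single $G$-conjugation to align that prefix with our chosen representative, and take the newly appended entry $h_j$; this $h_j$ then centralizes every $g_i$ (hence lies in $H$) and commutes with $h_1,\dotsc,h_{j-1}$. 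Surjectivity of $\phi$ is immediate from the case $j=k$, and the same construction turns an edge of $T^G(X)$ into an edge of $T^H$. The reverse direction, that an edge in $T^H$ produces an edge in $T^G(X)$ via $\phi$, is transparent from the definitions since appending an element of $H$ to an $H$-tuple corresponds, after prepending $(g_1,\dotsc,g_n)$, to appending to the $G$-tuple.

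The main obstacle is precisely this surjectivity / consistent-representative argument: the edge relation in $T^G$ only stipulates that \emph{some} representative of a child projects onto \emph{some} representative of its parent, so threading these matches along an entire path into one coherent tuple of the form $(g_1,\dotsc,g_n,h_1,\dotsc,h_k)$ is what requires the inductive chain of conjugations. Once this lemma is in hand, everything else falls out by unwinding the definitions, and Theorem~\ref{theorem:group-tuples} follows because there are only finitely many isomorphism types of subgroups $Z_G(g_1,\dotsc,g_n) \leq G$.
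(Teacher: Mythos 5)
Your proposal is correct and uses exactly the same map as the paper: concatenation $(h_1,\dotsc,h_k)\mapsto(g_1,\dotsc,g_n,h_1,\dotsc,h_k)$ inducing $T^{Z_G(g_1,\dotsc,g_n)}\to T^G(X)$. The paper dismisses the verification as ``easy to check,'' whereas you carefully supply the details, including the inductive consistent-representative argument needed for surjectivity; no gaps.
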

\begin{proof}
  Let $S = Z_G(g_1,\dotsc,g_n)$.
  Define a map $S^{(l)}\to G^{(n+l)}$ by
  \begin{equation*}
    (s_1,\dotsc,s_l)\mapsto (g_1,\dotsc,g_n,s_1,\dotsc,s_l).
  \end{equation*}
  It is easy to check that this map induces an isomorphism of rooted trees $T^S\to T^G(X)$.
\end{proof}
We now consider the examples of symmetric groups: since $S_2$ is abelian, $h_{S_2}(t) = f_{S_2}(t)$, which was computed earlier in this section.
Now $S_3$ has three conjugacy classes, which lie in different lineal isomorphism classes in $T^{S_3}$ since they have non-isomorphic centralizers.
The centralizers are given in the following table:
\begin{equation*}
  \begin{array}{ll}
    \hline
    \lambda & Z_{S_3}(x)\\
    \hline
    (1,1,1) & S_3\\
    (2,1) & C_2\\
    (3) & C_3\\
    \hline
  \end{array}
\end{equation*}
With the exception of the class of the identity element (with cycle type $(1,1,1)$) each of these centralizers is abelian.
If the orbit of a tuple has abelian centralizer, then all its descendants are lineally isomorphic to it.
For the singleton orbit of the identity element, we once again have three children, one corresponding to each partition of $3$.
Thus every pair of commuting elements on $S_3$ has centralizer isomorphic to that of an element of $S_3$.
The branching matrix of Theorem~\ref{theorem:lineal} is
\begin{equation*}
  \B =
  \begin{pmatrix}
    1 & 0 & 0\\
    1 & 2 & 0\\
    1 & 0 & 3
  \end{pmatrix}
\end{equation*}
A routine calculation shows that
\begin{equation*}
  h_{S_3}(t) = \frac{1 - 3t + t^2}{(1-t)(1-2t)(1-3t)}.
\end{equation*}

The group $S_4$ five conjugacy classes with centralizers given by:
\begin{equation*}
  \begin{array}{ll}
    \hline
    \lambda & Z_{S_4}(x)\\
    \hline
    (1,1,1,1) & S_4\\
    (2, 1, 1) & C_2\times C_2\\
    (2, 2) & C_2\wr S_2\\
    (3, 1) & C_3\\
    (4) & C_4\\
    \hline
  \end{array}
\end{equation*}
The only troublesome case here is $\lambda = (2,2)$.
The centralizer group in this case is a non-abelian group of order $8$, which we now proceed to analyse: for concreteness, consider the centralizer of the permutation $(12)(34)\in S_4$.
The centralizer subgroup consists of the permutations:
\begin{equation*}
  H = \{1, (12)(34), (12), (34), (13)(24), (14)(23), (1423), (1324)\}.
\end{equation*}
This group has five conjugacy classes, with centralizers given by:
\begin{equation*}
  \begin{array}{ll}
    \hline
    \text{class} & \text{centralizer}\\
    \hline
    1 & C_2\wr S_2\\
    (12)(34) & C_2\wr S_2\\
    (12), (34) & C_2\times C_2\\
    (14)(23), (13)(24) & C_2\times C_2\\
    (1324), (1423) & C_4\\
    \hline
  \end{array}
\end{equation*}
Thus, as in the case of $S_3$, the centralizers of pairs are all centralizers of elements in $S_4$.
The branching matrix is
\begin{equation*}
  \B =
  \begin{pmatrix}
    1 & 0 & 0 & 0 & 0\\
    1 & 4 & 2 & 0 & 0\\
    1 & 0 & 2 & 0 & 0\\
    1 & 0 & 0 & 3 & 0\\
    1 & 0 & 1 & 0 & 4
  \end{pmatrix},
\end{equation*}
which gives:
\begin{equation*}
  h_{S_4}(t) = \frac{1 - 5t + 6t^2 - t^3}{(1-t)(1-2t)(1-3t)(1-4t)}.
\end{equation*}
Similarly for $S_5$ we have
\begin{equation*}
  \begin{array}{ll}
    \hline
    \lambda & Z_{S_5}(x)\\
    \hline
    (1^5) & S_5\\
    (2,1^3) & C_2\times S_3\\
    (2,2,1) & C_2\wr S_2\\
    (3,1,1) & C_3\times S_2\\
    (3,2) & C_3\times C_2\\
    (4,1) & C_4\\
    (5) & C_5\\
    \hline
  \end{array}
\end{equation*}
The classes corresponding to the partitions $(3,1,1)$ and $(3,2)$ can be clubbed, as they have isomorphic centralizers (and therefore are lineally isomorphic).
In $C_2\times S_3$, we have the following count of classes and their centralizers:
\begin{equation*}
  \begin{array}{lc}
    \hline
    \text{centralizer} & \text{no. of classes}\\
    \hline
    C_2\times S_3 & 2\\
    C_2\times C_2 & 2\\
    C_2\times C_3 & 2\\
    \hline
  \end{array}
\end{equation*}
The only centralizer here which is not the centralizer of an element of $S_5$ is $C_2\times C_2$, which is abelian.
The branching matrix is given by:
\begin{equation*}
  \B =
  \begin{pmatrix}
    1 & 0 & 0 & 0 & 0 & 0 & 0 \\
    1 & 2 & 0 & 0 & 0 & 0 & 0 \\
    1 & 0 & 2 & 0 & 0 & 0 & 0 \\
    2 & 2 & 0 & 6 & 0 & 0 & 0 \\
    1 & 0 & 1 & 0 & 4 & 0 & 0 \\
    1 & 0 & 0 & 0 & 0 & 5 & 0 \\
    0 & 2 & 2 & 0 & 0 & 0 & 4
  \end{pmatrix}
\end{equation*}
whence one may compute:
\begin{equation*}
  h_{S_5}(t) = \frac{1 - 11t + 34t^2 - 21t^3 + 2t^4}{(1-t)(1-2t)(1-4t)(1-5t)(1-6t)}.
\end{equation*}
Our calculations of $h_{S_m}$ for small values of $m$ are summarized in Table~\ref{tab:hSn}.
\begin{table}[h]
  \centering
  \begin{equation*}
    \def\arraystretch{2.2}
    \begin{array}{cc}
      \hline
      m & h_{S_m}(t)\\
      \hline
      1 & \dfrac 1{1-t}\\
      2 & \dfrac 1{1-2t}\\
      3 & \dfrac{1-3t+t^2}{(1-t)(1-2t)(1-3t)}\\
      4 & \dfrac{1-5t+6t^2-t^3}{(1-t)(1-2t)(1-3t)(1-4t)}\\
      5 & \dfrac{1 - 11t + 34t^2 - 21t^3 + 2t^4}{(1-t)(1-2t)(1-4t)(1-5t)(1-6t)}\\
      \hline
    \end{array}
  \end{equation*}
  \caption{Generating functions for simultaneous conjugacy classes of commuting elements in $S_m$.}
  \label{tab:hSn}
\end{table}
The techniques at hand are not strong enough to derive an analog of the formula in Example~\ref{example:fSn} for $h_{S_m}$ for general $m$.

Theorem~\ref{theorem:group-tuples} can also be stated for finite algebras:
\begin{theorem}
  \label{theorem:alg-tuples}
  Let $A$ be a finite ring, $A^*$ its multiplicative group of units, and let
  \begin{equation*}
    A^{(n)} = \{(a_1,\dotsc,a_n)\in A^n\mid a_ia_j=a_ja_i \text{ for } 1\leq i,j\leq n\}.
  \end{equation*}
  Then $A^*$ acts on $A^{(n)}$ by simultaneous similarity:
  \begin{equation*}
    u\cdot (a_1,\dotsc,a_n) = (ua_1u^{-1},\dotsc,ua_nu^{-1}).
  \end{equation*}
  Let $c_A(n)$ denote the number of orbits for the action of $A^*$ on $A^{(n)}$.
  Then the generating function
  \begin{equation*}
    h_A(t) = \sum_{n=0}^\infty c_A(n)t^n
  \end{equation*}
  is a rational function of $t$.
\end{theorem}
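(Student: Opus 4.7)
The plan is to mimic the proof of Theorem~\ref{theorem:group-tuples} almost verbatim, with the finite group $G$ replaced by the finite ring $A$ and centralizer subgroups replaced by centralizer subrings. First I would organize the $A^*$-orbits on the sets $A^{(n)}$ into a rooted tree $T^A = \coprod_{n \geq 0} T^A_n$, where $T^A_n$ is the set of $A^*$-orbits on $A^{(n)}$, and an orbit $Y \in T^A_{n+1}$ is declared a child of $X \in T^A_n$ if some representative $(a_1,\dotsc,a_{n+1}) \in Y$ satisfies $(a_1,\dotsc,a_n) \in X$. The unique element of $T^A_0$ is the root.

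The key step is the analogue of Theorem~\ref{theorem:cent}: for $(a_1,\dotsc,a_n) \in A^{(n)}$ lying in the orbit $X \in T^A_n$, setting
\begin{equation*}
  S = Z_A(a_1,\dotsc,a_n) = \{b \in A \mid ba_i = a_ib \text{ for all } i\},
\end{equation*}
which is a subring of $A$ with unit group $S^*$, I claim the subtree $T^A(X)$ of descendants of $X$ is isomorphic as a rooted tree to $T^S$. The map sending $(s_1,\dotsc,s_l) \in S^{(l)}$ to $(a_1,\dotsc,a_n,s_1,\dotsc,s_l) \in A^{(n+l)}$ is well-defined (every $s_i$ commutes with every $a_j$ and with every other $s_k$) and descends to a map on orbits. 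To see this is a bijection one uses the standard observation that the $A^*$-stabilizer of the tuple $(a_1,\dotsc,a_n)$ is exactly $S^*$: two tuples $(a_1,\dotsc,a_n,s_1,\dotsc,s_l)$ and $(a_1,\dotsc,a_n,s_1',\dotsc,s_l')$ are $A^*$-conjugate iff they are conjugate by an element of $S^*$, and every descendant of $X$ has a representative of this form after choosing a representative of $X$ and a lift.

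Once this tree isomorphism is in place, the lineal isomorphism class of $X \in T^A$ is determined by the isomorphism type of the subring $Z_A(a_1,\dotsc,a_n)$. Since $A$ is finite, it has only finitely many subrings and hence only finitely many isomorphism classes of such centralizer subrings, so $T^A$ has only finitely many lineal isomorphism classes. Theorem~\ref{theorem:lineal} then delivers the rationality of $h_A(t) = \sum_{n \geq 0} c_A(n) t^n$.

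The main obstacle is the centralizer lemma, and specifically the verification that the $A^*$-stabilizer of a commuting tuple in $A$ coincides with the unit group of its centralizer subring. This requires checking that an element $u \in A^*$ with $u a_i u^{-1} = a_i$ for all $i$ lies in $S$ (immediate from the definition) and, conversely, that $S^* = S \cap A^*$, which is clear since inverses are computed in $A$ but land back in $S$ by commuting with each $a_i$. Everything else is a direct translation of the argument given for groups.
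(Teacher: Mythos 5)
Your proposal is correct and follows essentially the same route as the paper, which simply says the proof is analogous to Theorem~\ref{theorem:group-tuples} with centralizer subrings $Z_A(a_1,\dotsc,a_n)$ in place of centralizer subgroups, so that finiteness of $A$ bounds the number of lineal isomorphism classes and Theorem~\ref{theorem:lineal} applies. Your explicit check that the $A^*$-stabilizer of a commuting tuple equals $S^* = S \cap A^*$ is a detail the paper leaves implicit, and it is verified correctly.
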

\begin{proof}
  The proof is similar to Theorem~\ref{theorem:group-tuples}: Let
  \begin{equation*}
    Z_A(a_1,\dotsc,a_n) = Z_A(a_1)\cap \dotsc Z_A(a_n),
  \end{equation*}
  where $Z_A(a)$ denotes the subring of elements of $A$ that commute with $a$.
  The $A^*$-orbits of $(a_1,\dotsc,a_n)$ and $(b_1,\dotsc,b_l)$ are lineally isomorphic if the rings $Z(a_1,\dotsc,a_n)$ and $Z(b_1,\dotsc,b_l)$ are isomorphic.
\end{proof}
Let $\Fq$ be a finite field of order $q$.
Taking $A$ to be the algebra $M_m(\Fq)$ of $m\times m$ matrices with entries in $\Fq$ in Theorem~\ref{theorem:alg-tuples} gives simultaneous similarity classes of commuting $n$-tuples in $M_m(\Fq)$.
An $n$-tuple of commuting matrices is nothing but an $\Fq[x_1,\dotsc,x_n]$-module.
Two modules are isomorphic if and only if the corresponding $n$-tuples are simultaneously similar.
So we have:
\begin{theorem}
  \label{theorem:polyalg}
  Let $\Fq$ denote the finite field with $q$ elements, and for each positive integer $m$ let $c_{q,m}(n)$ denote the number of isomorphism classes of $m$-dimensional modules for the polynomial algebra $\Fq[x_1,\dotsc,x_n]$.
  Then the generating function:
  \begin{equation*}
    h_{q,m}(t) = \sum_{n=0}^\infty c_{q,m}(n)t^n
  \end{equation*}
  is a rational function of $t$.
\end{theorem}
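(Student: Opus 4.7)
The plan is to apply Theorem~\ref{theorem:alg-tuples} directly to the finite ring $A = M_m(\Fq)$ of $m \times m$ matrices with entries in $\Fq$. First I would note that $M_m(\Fq)$ is indeed a finite ring, with $|A| = q^{m^2}$, and that its multiplicative group of units is $A^* = GL_m(\Fq)$, so the hypotheses of Theorem~\ref{theorem:alg-tuples} are satisfied. That theorem immediately yields the rationality of the generating function $h_A(t)$ counting $A^*$-orbits on the set $A^{(n)}$ of commuting $n$-tuples in $A$ under simultaneous similarity.

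The remaining step is to establish the standard dictionary that identifies $c_{q,m}(n)$ with $c_A(n)$. Fix an $\Fq$-basis of $\Fq^m$; an $\Fq[x_1,\dotsc,x_n]$-module structure on $\Fq^m$ is determined by the $\Fq$-linear actions of the generators $x_1,\dotsc,x_n$, which are encoded by matrices $X_1,\dotsc,X_n \in M_m(\Fq)$. The commutativity relations $x_ix_j = x_jx_i$ in $\Fq[x_1,\dotsc,x_n]$ force the $X_i$ to pairwise commute, so that $(X_1,\dotsc,X_n) \in M_m(\Fq)^{(n)}$; conversely any such tuple defines an $m$-dimensional module structure. Two module structures give isomorphic modules precisely when the corresponding tuples are conjugate under a change of basis, i.e., when they lie in the same $GL_m(\Fq)$-orbit under simultaneous similarity.

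Combining these observations gives $c_{q,m}(n) = c_A(n)$ for every $n \geq 0$, hence $h_{q,m}(t) = h_A(t)$, which is rational by Theorem~\ref{theorem:alg-tuples}. In this theorem there is no real obstacle to overcome: the rationality is a direct consequence of Theorem~\ref{theorem:alg-tuples} once the module/commuting-tuple correspondence is in hand, and that correspondence is completely standard linear algebra. The substantive content of the result therefore lies entirely in the setup already developed; the proof is essentially a matter of unpacking definitions and invoking the earlier theorem.
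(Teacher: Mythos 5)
Your proof is correct and follows exactly the paper's route: apply Theorem~\ref{theorem:alg-tuples} to $A = M_m(\Fq)$ and identify isomorphism classes of $m$-dimensional $\Fq[x_1,\dotsc,x_n]$-modules with simultaneous similarity classes of commuting $n$-tuples of matrices. You merely spell out the module/commuting-tuple dictionary in more detail than the paper does.
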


The polynomials $h_{q,m}(t)$ are quite difficult to compute for $m\geq 4$, but seem to have very interesting combinatorial properties, an investigation of which is the subject of \cite{uday}.
For example,
\begin{align*}
  h_{q,1}(t) &= \frac 1{1-qt}\\
  h_{q,2}(t) &= \frac 1{(1-qt)(1-q^2t)}\\
  h_{q,3}(t) &= \frac {1+q^2t^2}{(1-qt)(q-q^2t)(1-q^3t)}.
\end{align*}
The details of these (and further) calculations can be found in \cite{uday}.

As with groups, counting of $A^*$-orbits in $A^n$ (instead of $A^{(n)}$) is much easier, because of the applicability of Burnside's lemma.
When $A = M_m(\Fq)$ this becomes the problem of counting isomorphism classes of $m$-dimensional representations of the free algebra $\Fq\langle x_1,\dotsc,x_n\rangle$, which in turn is a special case of the problem of counting representations of a quiver with the fixed dimension vector, a well-developed program which was started by Kac \cite{Kac83} in 1983 and culminated in the recent work of Hausel, Letellier and Rodriguez-Villegas \cite{HLR13} and Mozgovoy \cite{Moz}.
In contrast, we do not even know that $c_{q,m}(n)$ is a polynomial in $q$ for $m>4$.

The counting of isomorphism classes of $\Fq[x_1,\dotsc,x_n]$-modules appears to be related to the counting of similarity classes of matrices in finite quotients of discrete valuation rings.
Let $R$ be a discrete valuation ring with residue field $\Fq$.
Let $P$ denote the maximal ideal of $R$.
Matrices $A,B\in M_m(R/P^n)$ are said to be similar if $B = gAg^{-1}$ for some $g\in GL_m(R/P^n)$.
From the work of Singla~\cite{pooja10}, Jambor and Plesken~\cite{JamborPlesken} and Prasad, Singla and Spallone~\cite{PSS}, we know that the number of isomorphism classes of $m$-dimensional $\Fq[x_1,x_2]$-modules (what we have called $c_{q,m}(2)$ above) is equal to the  number of similarity classes in $M_m(R/P^2)$.
Further, by comparing the values for $h_{q,m}(t)$ quoted above with the results obtained by Avni, Onn, Prasad and Vaserstein~\cite{AOPV} we find that for $m\leq 3$, $c_{q,m}(n)$ matches the number of similarity classes in $M_m(R/P^n)$ for $m\leq 3$ and all $n$.

One is led to the following conjecture:
\begin{conjecture*}
  The number of similarity classes in $M_m(R/P^n)$ is equal to the number of isomorphism classes of $m$-dimensional $\Fq[x_1,\dotsc,x_n]$-modules for all positive integers $m$ and $n$.
\end{conjecture*}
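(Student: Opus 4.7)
The plan is to apply Theorem~\ref{theorem:lineal} to both sides of the conjectured equality and match the resulting rooted trees. On the polynomial-algebra side, Theorem~\ref{theorem:polyalg} already expresses $h_{q,m}(t)$ via the tree whose level-$n$ vertices are simultaneous similarity classes of commuting $n$-tuples in $M_m(\Fq)$; by Theorem~\ref{theorem:cent} and Theorem~\ref{theorem:alg-tuples} its branching is governed by the centralizer subring $Z_{M_m(\Fq)}(a_1,\dotsc,a_n)$. On the discrete valuation ring side, I would introduce a parallel rooted tree $T^{R,m}$ whose level-$n$ vertices are the similarity classes in $M_m(R/P^n)$, with an edge from $[A]$ to the class of each lift $[B]$ in $M_m(R/P^{n+1})$ satisfying $B \equiv A \pmod{P^n}$. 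An orbit-counting argument, exploiting that each fiber of the reduction $M_m(R/P^{n+1})\to M_m(R/P^n)$ is a torsor under the $\Fq$-vector space $M_m(\Fq)$ and that the $P^n$-congruence kernel of $GL_m(R/P^{n+1})$ acts on such a fiber by the infinitesimal commutator $Y\mapsto [Y,A]$, should show that $T^{R,m}$ has finitely many lineal isomorphism classes, with the class of $[A]$ controlled by the isomorphism type of the centralizer $Z_{M_m(R/P^n)}(A)$ regarded as a filtered $R/P^n$-algebra.

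Once both trees are available, the heart of the argument is a branching-preserving bijection between their lineal isomorphism classes. The natural guess is to attach to an orbit of a commuting tuple $(a_1,\dotsc,a_n)$ a canonical similarity class $[A]\in M_m(R/P^n)$ whose centralizer matches, after passing to residues or an associated graded, the $\Fq$-algebra $Z_{M_m(\Fq)}(a_1,\dotsc,a_n)$. The base cases $n=1,2$ are covered by Singla and by Prasad, Singla and Spallone, and the small-$m$ formulas $h_{q,1}$, $h_{q,2}$, $h_{q,3}$ recorded above are consistent with the conjectural isomorphism of trees for $m\leq 3$. One would then aim to promote this case-by-case evidence into a functorial assignment compatible with the two tree structures.

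The main obstacle, and presumably why the conjecture is still open, is precisely this matching step. The two trees live in \emph{genuinely different categories}: on the left each lifting step enlarges the base ring by one infinitesimal direction while the matrix size stays fixed (so the $\Fq$-dimension of the ambient module grows like $mn$), whereas on the right the $\Fq$-dimension of the module is always $m$ and it is the number of commuting operators that grows. No obvious equivalence of categories produces the bijection, and even the naive identification suggested by the equal-characteristic isomorphism $R/P^n\cong \Fq[t]/(t^n)$ would not carry over to the mixed-characteristic case, which the conjecture also contemplates. Handling non-abelian, non-semisimple intermediate centralizers uniformly in $m$ and $n$—the very situation that makes both $c_{q,m}(n)$ and the similarity-class count in $M_m(R/P^n)$ resistant to closed-form computation—appears to need techniques beyond the tree formalism of Theorem~\ref{theorem:lineal} alone.
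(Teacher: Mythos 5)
The statement you were asked about is stated in the paper as an open \emph{conjecture}: the paper offers no proof of it, only supporting evidence (agreement for $n\leq 2$ via the work of Singla, Jambor--Plesken, and Prasad--Singla--Spallone, and agreement for $m\leq 3$ and all $n$ by comparing the displayed $h_{q,m}(t)$ with the similarity-class counts of Avni--Onn--Prasad--Vaserstein). Your proposal does not prove it either, and to your credit you say so explicitly: the entire content of the conjecture is concentrated in the ``matching step'' you identify, namely a branching-preserving bijection between the lineal isomorphism classes of the two trees, and no such bijection is constructed. So what you have written is a reasonable research programme, not a proof, and the gap is not a local fixable step but the whole heart of the argument.

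Two further cautions about the programme itself. First, even the preliminary claim that the tree $T^{R,m}$ of similarity classes in $M_m(R/P^n)$ has finitely many lineal isomorphism classes controlled by filtered centralizer algebras is not something you can take for granted: the paper points out that du Sautoy needed model-theoretic methods to establish rationality of the generating function of $C_{q,m}(n)$, and only in residue characteristic zero, which suggests your torsor/infinitesimal-commutator argument does not straightforwardly yield the finiteness you need. Second, matching lineal classes is in any case a \emph{stronger} statement than the conjecture requires (the conjecture only asserts equality of cardinalities level by level), so if the trees turn out not to be isomorphic your strategy could fail even while the conjecture remains true; a proof might instead have to proceed through some coarser enumerative identity. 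As it stands, your text correctly situates the difficulty but does not close it.
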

If $C_{q,m}(n)$ denotes the number of similarity classes in $M_m(R/P^n)$, then du Sautoy~\cite{duS} has shown (using model theory) that when $R$ has characteristic zero, then $C_{q,m}(n)$, as a sequence in $n$, has a rational generating function.
His result is the analog of Theorem~\ref{theorem:polyalg} for similarity classes in $M_m(R/P^n)$.

\section{Point and vector configurations}
\label{sec:point-vec-configs}
The symmetric group $S_m$ acts on the set $\m = \{1,\dotsc,m\}$.
An $n$-point configuration in $\m$ is, by definition, an orbit of $S_m$ for its action on $\m^n$ by
\begin{equation*}
  w\cdot(x_1,\dotsc,x_n) = (w\cdot x_1,\dotsc, w\cdot x_n).
\end{equation*}
For example, there are two $2$-point configurations in $\m$ for $m\geq 2$: either the points $x_1$ and $x_2$ coincide, or they are distinct.
Likewise, there are five $3$-point configurations in $\m$ for $m\geq 3$, represented by
\begin{equation*}
  (1, 1, 1), (1, 1, 2), (1, 2, 1), (2, 1, 1)\text{ and } (1, 2, 3).
\end{equation*}
Let $c_m(n)$ denote the number of $n$-point configurations in $\m$.

We may compute $c_m(n)$ using Burnside's lemma.
With the notation of Example~\ref{example:fSn},
\begin{align*}
  c_m(n) & = \frac 1{m!} \sum_{w\in S_m} (\text{no. of fixed points of $w$})^n\\
  & = \sum_{\lambda\vdash m} \frac {m_1(\lambda)^n}{z_\lambda},
\end{align*}
so that
\begin{equation*}
  \sum_{n=0}^\infty c_m(n)t^n = \sum_{\lambda\vdash m} \frac 1{z_\lambda} \frac 1{1-m_1(\lambda)t}.
\end{equation*}

However, we shall see below that using Theorem~\ref{theorem:lineal} for the same computation leads to the standard ordinary generating functions and recurrence relations for Bell numbers \cite[A000110]{oeis} and Stirling numbers of the second kind \cite[A008277]{oeis}.

Let $T_n^{(m)}$ denote the set of $S_m$ orbits in $\m^n$, and $T^{(m)} = \coprod_{n=0}^\infty T_n^{(m)}$.
Say that $Y\in T_{n+1}^{(m)}$ is a child of $X\in T_n^{(m)}$ if there exists $(x_1,\dotsc,x_{n+1})\in Y$ such that $(x_1,\dotsc,x_n)\in X$.
We say that $X\in T^{(m)}_n$ has type $i$ if, for any $(x_1,\dotsc,x_n)\in X$, the number of distinct elements in the set $\{x_1,\dotsc,x_n\}$ is $i$.
Clearly, if $X$ has type $i$, then each of its children has type either $i$ or $i+1$.
Also, $(x_1,\dotsc,x_n,x_{n+1})$ and $(x_1,\dotsc,x_n,x_{n+1}')$ lie in the same $S_m$-orbit if and only if there exists a permutation which fixes $x_1,\dotsc,x_n$ and maps $x_{n+1}$ to $x_{n+1}'$.

Now suppose that $X$ is of type $i$ and $(x_1,\dotsc,x_n)\in X$.
If, for some $x_{n+1}\in \m$, the orbit of $(x_1,\dotsc,x_{n+1})$ is also of type $i$, then $x_{n+1}$ coincides with one of $x_1,\dotsc,x_n$ and is therefore fixed by any $w\in S_m$ which fixes them.
Thus, a node of type $i$ has $i$ children of type $i$.
On the other hand, if the orbit of $(x_1,\dotsc,x_{n+1})$ is of type $i+1$, then $x_{n+1}$ is different from each of $x_1,\dotsc,x_n$ and can therefore be permuted to any other element of $\m$ that is distinct from $x_1,\dotsc,x_n$ while fixing them.
It follows that a node of type $i$ has $1$ child of type $i+1$.

The branching matrix is given by
\begin{equation}
  \label{eq:branching-point-conf}
  \B =
  \begin{pmatrix}
    0 &   &   &   &  &   &  \\
    1 & 1 &   &   &  &   &  \\
    & 1 & 2 &   &  &   &  \\
    &   & 1 & 3 &  &   &  \\
    &  &  &  & \ddots &   &  \\
    &   &   &   &  & m-1 &  \\
    &   &   &   &  & 1 & m
  \end{pmatrix},
\end{equation}
a matrix whose diagonal entries are $0,\dotsc,m$, with $1$'s just below the diagonal and with all other entries zero.
One easily computes
\begin{equation}
  \label{eq:typei}
  e_i(1-\B t)^{-1}e_1 = \prod_{r=1}^i \frac t{1-rt}.
\end{equation}
We obtain:
\begin{theorem}
  The sequence $c_m(n)$ has generating function
  \label{theorem:point-configs}
  \begin{equation*}
    \sum_{n=0}^\infty c_m(n)t^n = \sum_{i=0}^m \prod_{r=0}^i \frac t{1-rt}
  \end{equation*}
\end{theorem}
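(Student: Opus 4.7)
The plan is to apply Theorem~\ref{theorem:lineal} with the $m+1$ types $i = 0, 1, \dotsc, m$ serving as the lineal isomorphism classes $C_0, C_1, \dotsc, C_m$, using the branching matrix $\B$ of~\eqref{eq:branching-point-conf} that the preceding discussion has already extracted. The one point still to verify before invoking the theorem is that two nodes sharing a type are actually lineally isomorphic, not merely that they have matching child counts. I would argue this by induction on depth: for each $n$, the truncation of $T^{(m)}(X)$ to descendants at distance at most $n$ from $X$ depends only on the type of $X$, because the children's types are determined by the type of $X$ and the truncated subtrees below each child are determined by the inductive hypothesis. Passing to the union over $n$ supplies the required rooted-tree isomorphism between any two type-$i$ nodes and confirms that the lineal isomorphism classes are exactly the types.

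With that in hand, the remainder reduces to evaluating $(I-\B t)^{-1}e_1$. Since $I - \B t$ is lower bidiagonal with diagonal entries $1-rt$ for $r = 0, 1, \dotsc, m$ and subdiagonal entries $-t$, forward substitution in $(I-\B t) v = e_1$ gives $v_0 = 1$ and the one-step recurrence $v_i = \frac{t}{1-it}\,v_{i-1}$ for $i \geq 1$, so that
\begin{equation*}
  v_i = \prod_{r=1}^i \frac{t}{1-rt},
\end{equation*}
which recovers~\eqref{eq:typei}. Summing the entries of $v$ according to~\eqref{eq:ogf} produces the generating function in the theorem (with the empty product convention $v_0 = 1$ absorbing the root contribution).

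The main (and really only) obstacle is the lineal-isomorphism verification in the first paragraph; once one is satisfied that types exhaust the lineal classes, the remaining step is just triangular back-substitution through a bidiagonal matrix.
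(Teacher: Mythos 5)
Your proposal is correct and follows the paper's route exactly: the paper likewise takes the types $0,\dotsc,m$ as the classes $C_i$, reads off the bidiagonal branching matrix \eqref{eq:branching-point-conf} from the ``$i$ children of type $i$, one child of type $i+1$'' analysis, and inverts $I-\B t$ to obtain \eqref{eq:typei}; your additional induction showing that same-type nodes really are lineally isomorphic is a point the paper leaves implicit, and is a sound (though not strictly necessary) supplement, since the recurrence in the proof of Theorem~\ref{theorem:lineal} only uses that the number of children in each class depends on the parent's class. One small remark: the series you derive, $\sum_{i=0}^m\prod_{r=1}^{i}\frac{t}{1-rt}$, is the correct one and matches \eqref{eq:typei}; the lower limit $r=0$ in the printed statement of the theorem introduces a spurious factor of $t$ in each summand and is evidently a typographical slip.
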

Each $n$-tuple $(x_1,\dotsc,x_n)$ gives rise to an equivalence relation on $\n$; indices $i,j\in \n$ are equivalent if $x_i=x_j$.
Two tuples are in the same $S_m$-orbit if and only if they give rise to the same equivalence relation on $\n$.
Thus the number of $n$-point configurations in $\m$ is nothing but the number of equivalence relations on $\n$ with at most $m$ equivalence classes.
For $m\geq n$, this number is the well-known Bell number $B_n$.
Under the correspondence between $n$-point configurations in $\m$ and equivalence relations on $\n$ with at most $m$ equivalence classes, point configurations of type $i$ map to equivalence relations with exactly $i$ equivalence classes.
The number of equivalence relations with exactly $i$ equivalence classes is the well-known \emph{Stirling number of the second kind}, usually denoted $S(n,i)$ or $\begin{Bmatrix} n\\i\end{Bmatrix}$ \cite[Section~1.9]{ec1}.
The identity~\eqref{eq:typei} becomes a well-known generating function for Stirling numbers of the second kind \cite[Eq. (1.94c)]{ec1}:
\begin{equation*}
  \sum_{n=0}^\infty S(n,i)t^n = t^i\prod_{r=0}^i \frac 1{1-rt}.
\end{equation*}
This identity reflects the obvious fact that $S(n,i) = 0$ for $i>n$.
The branching rule \eqref{eq:branching-point-conf} gives the standard recurrence relation for Stirling numbers of the second kind \cite[Eq.~(1.93)]{ec1}:
\begin{equation}
  \label{eq:recurrence-Sterling2}
  S(n,k) = S(n-1,k-1) + k S(n-1,k).
\end{equation}

By definition, the Bell number $B_n$ is the number of equivalence relations on a set of order $n$.
Clearly, $B_n = \sum_{i=0}^\infty S(n,i)$ which equals $c_m(n)$ provided that $m\geq n$.
Thus the ordinary generating function for Bell numbers (see, e.g., \cite[Lemma~8]{LangA071919}) is obtained:
\begin{equation*}
  \sum_{n=0}^\infty B_nt^n = \sum_{i=0}^\infty \prod_{r=0}^i \frac t{1-rt}.
\end{equation*}

Let $\Fq$ denote a finite field with $q$ elements.
The general linear group $GL_m(\Fq)$ acts on the vector space $\Fq^m$, and therefore also on $n$-tuples of vectors in it:
\begin{equation*}
  g\cdot (x_1,\dotsc,x_n) = (g(x_1),\dotsc,g(x_n)),
\end{equation*}
for $g\in GL_m(\Fq)$ and for $x_1,\dotsc, x_n\in \Fq^m$.
A configuration of $n$ vectors in $\Fq^m$ is an orbit of $GL_m(\Fq)$ on $(\Fq^m)^n$.

Let $T^{q,m}_n$ denote the set of $GL_m(\Fq)$-orbits in $(\Fq^m)^n$.
Say that $Y\in T^{q,m}_{n+1}$ is a child of $X\in T^{q,m}_{n}$ if there exists $(x_1,\dotsc,x_{n+1})\in Y$ such that $(x_1,\dotsc,x_n)\in X$.
Let $T^{q,m} = \coprod_{n=0}^\infty T^{q,m}_n$.
We say that $X\in T_{q,m}$ has type $i$ if, for any $(x_1,\dotsc,x_n)\in X$, the dimension of the subspace spanned by the set $\{x_1,\dotsc,x_n\}$ is $i$.
If $X$ is of type $i$, then a child of $X$ must be of type $i$ or $i+1$.
If $(x_1,\dotsc,x_n,x_{n+1})$ is of type $i$, then $x_{n+1}$ lies in the span of $x_1,\dotsc,x_n$.
Therefore any element of $GL_m(\Fq)$ that fixes $x_1,\dotsc,x_n$ fixes $x_{n+1}$ as well.
Therefore, a tuple of type $i$ has $q^i$ children of type $i$.
If, on the other hand, $(x_1,\dotsc,x_{n+1})$ and $(x_1,\dotsc,x'_{n+1})$ both have type $i+1$, then $x_{n+1}$ and $x'_{n+1}$ are linearly independent of $x_1,\dotsc,x_n$, so there exists $g\in GL_m(\Fq)$ mapping $x_{n+1}$ to $x'_{n+1}$ while fixing $x_1,\dotsc,x_n$.
Therefore, a tuple of type $i$ has only one child of type $i+1$.

The branching matrix is given by
\begin{equation}
  \label{eq:branching-vect-configs}
  \B =
  \begin{pmatrix}
    1 &   &   &   &  &   &  \\
    1 & q &   &   &  &   &  \\
    & 1 & q^2 &   &  &   &  \\
    &   & 1 & q^3 &  &   &  \\
    &  &  &  & \ddots &   &  \\
    &   &   &   &  & q^{m-1} &  \\
    &   &   &   &  & 1 & q^m
  \end{pmatrix}.
\end{equation}
We obtain:
\begin{equation}
  \label{eq:eii-bt}
  e_i(I-\B t)^{-1}e_1 = t^i \prod_{r=0}^i\frac 1{1-q^rt} \text{ for } 1\leq i \leq m.
\end{equation}
\begin{theorem}
  \label{theorem:vs-configs}
  The generating function fot $\{|T^{q,m}_n|\}_n$ is
  \begin{equation*}
    \sum_{n=0}^\infty |T^{q,m}_n|t^n = \sum_{i=0}^m t^i \prod_{r=0}^i \frac 1{1-q^rt}
  \end{equation*}
\end{theorem}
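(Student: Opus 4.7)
The plan is to invoke Theorem~\ref{theorem:lineal} directly, since the paragraph preceding the theorem has already done all the conceptual work: it identifies the lineal isomorphism classes of $T^{q,m}$ as being indexed by the dimension-type $i\in\{0,1,\dotsc,m\}$ of the span of a representative tuple, and extracts the branching matrix $\B$ of \eqref{eq:branching-vect-configs}. (Finiteness of the number of classes is automatic, since every tuple in $(\Fq^m)^n$ spans a subspace of dimension at most $m$.) With $\B$ in hand and the root in the type-$0$ class, equation \eqref{eq:ogf} yields
\begin{equation*}
  \sum_{n=0}^\infty |T^{q,m}_n|\,t^n = \one (I-\B t)^{-1} e_1 = \sum_{i=0}^m e_i'(I-\B t)^{-1}e_1,
\end{equation*}
so the theorem reduces to establishing the closed form \eqref{eq:eii-bt} for each coordinate of $(I-\B t)^{-1}e_1$.

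Next I would carry out the triangular solve. Because $\B$ is lower bidiagonal, so is $I-\B t$, with diagonal entries $1-q^i t$ and subdiagonal entries $-t$. Denote by $v_i$ the $i$-th coordinate (indexed from $0$) of $v=(I-\B t)^{-1}e_1$. The first row of the system $(I-\B t)v = e_1$ gives $(1-t)v_0 = 1$, hence $v_0 = 1/(1-t)$; for $i\geq 1$, the $i$-th row reads $-t\,v_{i-1} + (1-q^i t)v_i = 0$, yielding the one-step recursion $v_i = \frac{t}{1-q^i t}\,v_{i-1}$. Iterating immediately gives
\begin{equation*}
  v_i = t^i\prod_{r=0}^i \frac{1}{1-q^r t},
\end{equation*}
which is precisely \eqref{eq:eii-bt}. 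Summing over $i$ from $0$ to $m$ produces the stated generating function.

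There is no real obstacle: the substantive content (the type stratification, the branching counts $q^i$ and $1$, and reduction to a finite matrix problem) has already been assembled. The only thing to be careful about is the bookkeeping in the triangular solve, in particular checking that the index $r$ in the product indeed runs from $0$ (corresponding to the $(1-t)$ factor coming from the type-$0$ diagonal entry) through $i$, so that the formula specializes correctly at $i=0$ to $v_0 = 1/(1-t)$.
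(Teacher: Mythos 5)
Your proposal is correct and follows the paper's own route exactly: the paper likewise derives the branching matrix \eqref{eq:branching-vect-configs} from the type stratification, applies Theorem~\ref{theorem:lineal}, records \eqref{eq:eii-bt}, and obtains the theorem by summing over $i$. Your explicit triangular solve just fills in the computation that the paper states with ``We obtain,'' and your bookkeeping (including the $i=0$ check) is right.
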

The quantity $|T^{q,m}_n|$ does not depend on $m$ so long as $m\geq n$.
The stable value of this quantity $B_{q,n}:=|T^{q,n}_n|$ may be regarded as an analog of the Bell number for which we get the ordinary generating function:
\begin{equation}
  \label{eq:ogfqbell}
  \sum_{n=0}^\infty B_{q,n} t^n = \sum_{i=0}^\infty t^i \prod_{r=0}^i \frac 1{1-q^rt}.
\end{equation}
Likewise, if we only count those $n$-vector configurations in $\Fq^m$ which span an $i$-dimensional subspace of $\Fq^m$ when $m\geq i$ (given by \eqref{eq:eii-bt}, these clearly do not depend on $m$ so long as $m\geq i$) we obtain an analog of the Stirling number of the second kind:
\begin{equation*}
  \sum_{n=0}^\infty S_q(n,i)t^n = t^i\prod_{r=0}^i \frac 1{1-q^rt}.
\end{equation*}
The branching rule \eqref{eq:branching-vect-configs} gives the recurrence relation
\begin{equation}
  \label{eq:q-Stirling-rec}
  S_q(n,i) = S_q(n-1,i-1) + q^i S_q(n-1,i)\text{ for } 0<i<n.
\end{equation}
which is not the same as the recurrence relation for the usual $q$-Stirling numbers \cite[Eq.~(3.8)]{Gould}; it does not specialize to \eqref{eq:recurrence-Sterling2} at $q=1$.
Instead it is one of the Pascal identities for Gaussian binomial coefficients:
\begin{equation*}
  \binom ni_q = \binom{n-1}{i-1}_q + q^i \binom{n-1}i_q.
\end{equation*}
Also, just like Gaussian binomial coefficients, $S_q(n,0) =S_q(n,n) =1$, so we have:
\begin{theorem}
  \label{theorem:subspaces-configs}
  The number of $n$-vector configurations in $\Fq^m$ whose span has dimension $i$ when $m\geq i$ is equal to the number of $i$-dimensional subspaces of an $n$-dimensional vector space:
  \begin{equation}
    \label{eq:1}
    S_q(n,i) = \binom ni_q.
  \end{equation}
\end{theorem}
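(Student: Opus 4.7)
The plan is to prove the identity by induction on $n$, exploiting the fact that $S_q(n,i)$ and $\binom{n}{i}_q$ obey the same Pascal-style recurrence with matching boundary data. The paper has already extracted the recurrence \eqref{eq:q-Stirling-rec}, namely $S_q(n,i) = S_q(n-1,i-1) + q^i S_q(n-1,i)$ for $0 < i < n$, from the branching matrix \eqref{eq:branching-vect-configs}; and the Gaussian binomial coefficient satisfies the same Pascal identity. So the two quantities obey identical recursion relations in the interior range.

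Next I would verify the boundary values. For $i = 0$, the only $n$-vector configuration whose span has dimension $0$ is the all-zero tuple, so $S_q(n,0) = 1 = \binom{n}{0}_q$. For $i = n$, a configuration of type $n$ consists of $n$ linearly independent vectors in $\Fq^m$ (with $m \geq n$), and $GL_m(\Fq)$ acts transitively on ordered $n$-tuples of linearly independent vectors (any such tuple extends to a basis, and bases are permuted transitively), so $S_q(n,n) = 1 = \binom{n}{n}_q$. Together with the base case $S_q(0,0) = 1$ (the empty configuration), this provides enough initial data.

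With matching recurrence and matching boundary values, a straightforward induction on $n$ closes the argument: for each $n \geq 1$, the cases $i = 0$ and $i = n$ are handled by the boundary computation, and for $0 < i < n$ one applies \eqref{eq:q-Stirling-rec} and the Pascal identity for Gaussian binomial coefficients to the two sides and invokes the inductive hypothesis at $n-1$ to conclude $S_q(n,i) = \binom{n}{i}_q$.

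The only real content is the boundary check $S_q(n,n) = 1$; this reduces to transitivity of $GL_m(\Fq)$ on linearly independent $n$-tuples, which is a standard fact. Everything else is a mechanical comparison of two identical recurrences, so no substantial obstacle is expected.
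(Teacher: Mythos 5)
Your proposal is correct and is essentially the paper's own argument: the paper derives the recurrence \eqref{eq:q-Stirling-rec} from the branching matrix, observes it is a Pascal identity for Gaussian binomial coefficients, and notes the boundary values $S_q(n,0)=S_q(n,n)=1$, exactly as you do. (The paper additionally sketches a bijective proof via row spaces of the matrix $\X$, but that is offered as a supplement, not as the primary justification.)
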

The identity \eqref{eq:q-Stirling-rec} becomes
\begin{equation*}
  \sum_{n=0}^\infty \binom ni_q t^n = t^i \prod_{r=0}^i \frac 1{1-q^it},
\end{equation*}
a well-known generating function for Gaussian binomial coefficients \cite[p.~74]{ec1}.

For a bijective proof of Theorem~\ref{theorem:subspaces-configs}, given an $n$-tuple $(x_1,\dotsc,x_n)$ of vectors in $\Fq^m$, write a matrix $\X$ whose columns are the coordinates of these vectors; if $x_j = (x_{1j},x_{2j},\dotsc,x_{mj})$, then
\begin{equation*}
  \X =
  \begin{pmatrix}
    x_{11} & x_{12} & \cdots & x_{1n}\\
    x_{21} & x_{22} & \cdots & x_{2n}\\
    \vdots & \vdots & \ddots & \vdots\\
    x_{m1} & x_{m2} & \cdots  & x_{mn}
  \end{pmatrix}.
\end{equation*}
Similarly, if $(x'_1,\dotsc,x'_n)$ is another $n$-tuple of vectors, and $\X'$ is the corresponding matrix, then $(x_1,\dotsc,x_n)$ and $(x_1',\dotsc,x_n')$ lie in the same $GL_m(\Fq)$-orbit if and only if there exists $g\in GL_m(\Fq)$ such that
\begin{equation}
  \label{eq:equiv}
  \X' = g\X.
\end{equation}
However, the condition \eqref{eq:equiv} is also necessary and sufficient for two $m\times n$ matrices $\X$ and $\X'$ to have the same row space.
Thus the function that takes the tuple $(x_1,\dotsc,x_n)$ to the row space of $\X$ descends to a bijective map from the set of $n$-vector configurations in $\Fq^m$ to the set of subspaces of $\Fq^n$ which are spanned by $m$ vectors, in other words, subspaces of dimension $m$ or less.
If $m\geq n$, then this is the set of all subspaces of $\Fq^n$.
Since the row rank of a matrix is equal to its column rank, if the $n$-tuple $(x_1,\dotsc, x_n)$ spans an $i$-dimensional vector space, the row space of $\X$ is an $i$-dimensional subspace of $\Fq^m$.
This sets up a bijection from the set of configurations of $n$ vectors in $\Fq^m$ of type $i$ and the set of all $i$-dimensional subspaces of $\Fq^m$ when $m\geq i$.

Anilkumar and Prasad \cite{anilprasad} studied the number of configurations of pairs in finite abelian $p$-groups.
They conjectured that these numbers are represented by polynomials in $p$ with non-negative integer coefficients.
It would be interesting to try to generalize the ideas behind Theorems~\ref{theorem:vs-configs} and~\ref{theorem:subspaces-configs} to counting configurations of tuples in finite abelian $p$-groups.
\subsection*{Acknowledgements}
The author thanks Uday Bhaskar Sharma for many helpful discussions.
He also thanks Kunal Dutta and S. Viswanath for feedback on a draft of this paper.
\bibliographystyle{abbrv}
\bibliography{lineal}
\end{document}